
\documentclass[12pt]{amsart}
\usepackage{amsmath,amsthm,amscd,amsfonts,amssymb}
\usepackage{latexsym}
\usepackage{comment}

\begin{document}

\newtheorem{theorem}{Theorem}[section]
\newtheorem{prop}[theorem]{Proposition}
\newtheorem{lemma}[theorem]{Lemma}
\newtheorem{cor}[theorem]{Corollary}

\title{On a Theorem of Scott and Swarup}
\thanks{Research partly supported by Alfred P. Sloan
Doctoral Dissertation Fellowship, Grant No. DD 595 \\ 
AMS subject classification =   20F32, 57M50 \\ This is a corrected version of the paper \cite{mahan-scottswar}.   The original references
to preprints,
many of which are published now, have been retained. Extra references are added.}

\author{Mahan Mitra}

\begin{abstract}
 Let $1 \rightarrow H \rightarrow G \rightarrow \mathbb{Z} 
\rightarrow 1$ be an exact sequence of hyperbolic groups
induced by a fully irreducible automorphism  $\phi$ of the free group
$H$. 
Let  $H_1 ( \subset H)$ be  a  finitely
generated distorted 
subgroup of $G$.
Then  $H_1$ is
of finite index in $H$. This is
an analog of a Theorem of Scott and Swarup for surfaces
in hyperbolic 3-manifolds.
\end{abstract}

\maketitle

\section{Introduction}

In \cite{scottswar} , Scott and Swarup prove the following theorem:

\medskip

{\bf Theorem} \cite{scottswar} {\it
Let $1 \rightarrow H \rightarrow G \rightarrow  {\mathbb{Z}} \rightarrow 1$
be an exact sequence of hyperbolic groups
induced by a pseudo Anosov  diffeomorphism of a
closed surface with
fundamental group $H$.
  Let $H_1$ be a finitely generated subgroup of infinite index
 in 
$H$. Then $H_1$ is quasiconvex in $G$.}

\medskip

In this paper we derive an analogous result for free groups (see Section 2
below or \cite{BFH} \cite{BFHa} \cite{Gromov2} for definitions).

We note at the outset that {\it hyperbolic} stands for two notions.
When qualifying manifolds, they indicate spaces of constant
 curvature equal to -1. When qualifying groups or metric spaces,
we use {\it hyperbolic} in the sense of Gromov \cite{Gromov}.
It will be clear from the context which of these meanings is 
relevant.

\medskip

{\bf Theorem \ref{main} } {\it
Let $1 \rightarrow H \rightarrow G \rightarrow  {\mathbb{Z}} \rightarrow 1$
be an exact sequence of
hyperbolic groups
induced by an aperiodic fully irreducible automorphism of the
free group $H$. Let $H_1$ be a finitely generated subgroup of infinite index
 in 
$H$. Then $H_1$ is quasiconvex in $G$.}

\medskip

In fact the methods of this paper can be used to give a new proof
of the Theorem of Scott and Swarup mentioned above. We sketch this proof 
for closed surfaces first. 
Let $M$ be a closed hyperbolic 3-manifold fibering over the circle with 
fiber $F$. Let $\widetilde F$ and $\widetilde M$ denote the universal
covers of $F$ and $M$ respectively. Then $\widetilde F$ and $\widetilde M$
are quasi-isometric to ${\mathbb{H}}^2$ and ${\mathbb{H}}^3$ respectively. Now let
${{\mathbb{D}}^2}={\mathbb{H}}^2\cup{\mathbb{S}}^1_\infty$ and 
${{\mathbb{D}}^3}={\mathbb{H}}^3\cup{\mathbb{S}}^2_\infty$
denote the standard compactifications. In \cite{CT} Cannon and Thurston
show that the usual inclusion $i$
of $\widetilde F$ into $\widetilde M$
extends to a continuous map $\hat{i}$
from ${\mathbb{D}}^2$ to ${\mathbb{D}}^3$. Cannon and Thurston further 
show that $\hat{i}$ identifies precisely those pairs of points which
are boundary points of an ending lamination. Since a leaf of the
stable (or unstable) lamination is dense in in the whole lamination,
it cannot be carried by a (perhaps immersed) proper sub-surface (one can
see this, for instance, by using the fact that surface groups are LERF
\cite{scott} ). The subgroup corresponding to the fundamental group
of such a subsurface must therefore be quasiconvex in $G$.

\medskip

This idea goes through for free groups. We give a brief sketch for
aperiodic automorphisms. In this case, Bestvina, Feighn and Handel
\cite{BFH} have shown that any leaf of the stable (or unstable)
lamination `fills' $H$, i.e. it cannot be carried by a finitely
generated subgroup $H_1$ of infinite index in $H$. We combine this with the
description of boundary identifications given in \cite{mitra2}
to show that no pair of points on the boundary of $H_1$ are identified.
Thus $H_1$ must be quasiconvex in $G$. 

\medskip

\section{Ending Laminations}

Let $G$ be a hyperbolic group in the sense of Gromov \cite{Gromov} . Let $H$
be a hyperbolic subgroup of $G$. Choose a finite
generating set of $G$ containing a finite generating set of $H$. Let
$\Gamma_G$ and $\Gamma_H$ be the Cayley graphs of $G$, $H$ with respect to 
these generating sets. Let $i : \Gamma_H \rightarrow \Gamma_G$ denote the
inclusion map.

\medskip

{\bf Definition :} \cite{Gromov2} \cite{farb}
{\it If $i: \Gamma_H \rightarrow \Gamma_G$ 
be an embedding of the Cayley
graph of $H$ into that of $G$, then the {\it distortion} function is
given by
\begin{center}
$disto(R) = Diam_{\Gamma_H}({\Gamma_H}{\cap}B(R))$,
\end{center}
where $B(R)$ is the ball of radius $R$ around $1\in{\Gamma_G}$.}

\medskip

If $H$ is quasiconvex in $G$ the distortion function is linear and we shall
refer to $H$ as an undistorted subgroup. Else, $H$ will be termed
distorted.

For distorted subgroups, the distortion information is encoded in a
certain set of ending laminations defined below.

\medskip

{\bf Definition :} {\it If $\lambda$ is a geodesic segment in $\Gamma_H$
then $\lambda^r$, a {\bf geodesic realization} of $\lambda$, is a geodesic
in $\Gamma_G$ joining the end-points of $i({\lambda})$. }

Now consider sequences of geodesic segments $\lambda_i \subset \Gamma_H$
such that $1\in{\lambda_i}$ and
${\lambda_i^r}\cap{B(i)} = \emptyset$, where $B(i)$ is the
ball of radius $i$ around $1 \in \Gamma_G$. Take all bi-infinite
subsequential limits (in the Hausdorff topology) of all such sequences
$\{ \lambda_i \}$ and denote this set by $\Sigma$.

Let $t_h$ denote left translation by $h \in H$. Let $\widehat{\Gamma_H}$
and $\widehat{\Gamma_G}$ denote the Gromov compactifications of $\Gamma_H$
and $\Gamma_G$ respectively. Further let $\partial{\Gamma_H}$ and
$\partial{\Gamma_G}$ denote the boundaries of $\Gamma_H$
and $\Gamma_G$ respectively  \cite{Gromov} .

\medskip

{\bf Definition :} {\it The set of ending laminations 
$\Lambda = \Lambda (H,G)$ is given by 
\begin{center}
$\Lambda = \{ (p,q) \in \partial{\Gamma_H} \times \partial{\Gamma_H} | 
p \neq q$  and $p,q$ are the end-points of ${t_h}({\lambda})$ for some 
$\lambda \in \Sigma \}$
\end{center} }

\medskip

\begin{lemma}
$H$ is quasiconvex in $G$ if and only if $\Lambda = \emptyset$
\label{qccrit}
\end{lemma}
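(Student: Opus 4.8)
The plan is to prove the two directions separately, with the "only if" direction being essentially a packaging of the definitions and the "if" direction carrying the real content. For the forward direction, suppose $H$ is quasiconvex in $G$. Then there is a constant $C$ so that every geodesic realization $\lambda^r$ of a geodesic segment $\lambda \subset \Gamma_H$ lies in a $C$-neighborhood of $i(\Gamma_H)$, and moreover $i$ is a quasi-isometric embedding, so that $\lambda^r$ stays within bounded Hausdorff distance of $i(\lambda)$. Consequently, if $1 \in \lambda_i$ and $\lambda_i^r \cap B(i) = \emptyset$, then the $H$-distance from $1$ to $\lambda_i$ is at least $i/\mu$ (minus a constant) for the quasi-isometry constant $\mu$; hence no bi-infinite Hausdorff limit $\lambda \in \Sigma$ can pass through any fixed point, so $\Sigma = \emptyset$ and therefore $\Lambda = \emptyset$. (One should be slightly careful: $\Sigma$ consists of limits of segments through $1$, but the point is that the $H$-length of the relevant approaching portion tends to infinity, so a genuine bi-infinite limit leaf does not exist.)

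For the converse, I would argue the contrapositive: if $H$ is distorted in $G$, then $\Lambda \neq \emptyset$. Since $H$ is not quasiconvex, the distortion function $disto(R)$ is superlinear, so for each $i$ we can find a geodesic segment $\mu_i \subset \Gamma_H$ with both endpoints in $B(i) \cap \Gamma_H$ but whose $\Gamma_H$-length grows faster than linearly in $i$; equivalently, there are geodesic segments in $\Gamma_H$ whose geodesic realizations in $\Gamma_G$ come uniformly close to $1$ while the segments themselves are arbitrarily long in $\Gamma_H$. Translating by an appropriate $h_i \in H$, we arrange a sequence of long geodesic segments $\lambda_i \ni 1$ in $\Gamma_H$ whose realizations $\lambda_i^r$ satisfy $\lambda_i^r \cap B(i) = \emptyset$ — this is the definitional setup for $\Sigma$. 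By local finiteness of $\Gamma_H$ and the Arzelà–Ascoli/diagonal argument, a subsequence converges in the Hausdorff topology to a bi-infinite geodesic $\lambda$ through $1$, so $\lambda \in \Sigma$, and its two endpoints $p \neq q \in \partial \Gamma_H$ give an element of $\Lambda$. The one subtlety to check is that the limit is genuinely bi-infinite and $p \neq q$: this uses that the lengths of the $\lambda_i$ on both sides of $1$ tend to infinity, which in turn comes from choosing the translating elements $h_i$ so that $1$ sits deep inside $\lambda_i$ — a standard pigeonhole/rechoosing maneuver.

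The main obstacle I expect is the bookkeeping in the converse direction: one must extract from mere distortion (a statement about diameters of balls) a sequence of segments that is simultaneously long in $\Gamma_H$, has realizations escaping every $B(i)$ in $\Gamma_G$, and passes through $1$ with both sides long, so that the Hausdorff limit is an honest bi-infinite leaf. This is where one has to be careful that "distorted" is being used in the sense that forces such escaping segments to exist, rather than, say, merely the failure of a single quasiconvexity inequality; invoking the equivalence between non-quasiconvexity and the existence of such families (a result already available in the literature on distortion, e.g. the references to Gromov and Farb cited above) is the cleanest route. The forward direction, by contrast, is routine once one records that quasiconvex embeddings are quasi-isometric embeddings with the Morse property for geodesic realizations.
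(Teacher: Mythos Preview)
Your argument is essentially the paper's: quasiconvexity forces $\lambda_i^r$ to stay within bounded distance of $i(\lambda_i)\ni 1$, so the defining sequences for $\Sigma$ cannot exist; conversely, non-quasiconvexity yields segments with a point $p_i\in\lambda_i$ far in $\Gamma_G$ from the realization, and translating by $p_i^{-1}$ produces the required sequence through $1$. One slip to fix: in the forward direction you write ``the $H$-distance from $1$ to $\lambda_i$ is at least $i/\mu$'', but $1\in\lambda_i$ makes that distance zero; the intended contradiction is that $d_G(1,\lambda_i^r)$ is uniformly bounded (since $\lambda_i^r$ fellow-travels $i(\lambda_i)\ni 1$), which is incompatible with $\lambda_i^r\cap B(i)=\emptyset$ for large $i$. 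Your detour through the distortion function in the converse is unnecessary --- the paper goes directly from the failure of quasiconvexity to the existence of $\lambda_i$ and $p_i\in\lambda_i$ with $\lambda_i^r\cap B_{p_i}(i)=\emptyset$, which is exactly the ``cleanest route'' you yourself identify at the end.
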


{\bf Proof :} Suppose $H$ is quasiconvex in $G$. Then any geodesic realization
$\lambda^r$ of a geodesic segment $\lambda \subset \Gamma_H$ lies in a 
bounded neighborhood of $\Gamma_H$ and hence of $\lambda$ as $H$ is
hyperbolic. Hence $\Lambda = \emptyset$ .

Conversely, if $H$ is not quasiconvex in $G$, there exist 
$\lambda_i \subset \Gamma_H$ and $p_i \in \lambda_i$ such that
$\lambda_i^r \cap B_{p_i} (i) = \emptyset$, where $B_{p_i} (i)$ denotes
the ball of radius $i$ around $p_i$ in $\Gamma_G$. Translating by
$p_i^{-1}$ and taking subsequential limits, we get $\Sigma \neq \emptyset$
and hence $\Lambda \neq \emptyset$. $\Box$

\medskip

{\bf Definition :} {\it A {\bf Cannon-Thurston map} for the pair
$(H,G)$ is a map 
$\hat{i} : \widehat{\Gamma_H} \rightarrow \widehat{\Gamma_G}$ 
which is a continuous
extension of $i : \Gamma_H \rightarrow \Gamma_G$. }

\medskip

Note that if such a continuous extension exists, it is unique.
We get a simplified collection of ending laminations when a Cannon-Thurston
map exists.

\smallskip

{\bf Definition :} {\it $\Lambda_{CT} = 
 \{ (p,q) \in \partial{\Gamma_H} \times \partial{\Gamma_H} | 
p \neq q$  and $\hat{i} (p) = \hat{i} (q) \}$}.

\medskip

\begin{lemma}
If a Cannon-Thurston map exists, $\Lambda = \Lambda_{CT}$.
\label{equality}
\end{lemma}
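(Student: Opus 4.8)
The plan is to prove the two inclusions $\Lambda\subseteq\Lambda_{CT}$ and $\Lambda_{CT}\subseteq\Lambda$ separately, using two standard facts about a $\delta$-hyperbolic geodesic space $X$ with basepoint $x_0$: first, $d(x_0,[a,b])$ and the Gromov product $(a\mid b)_{x_0}$ agree up to an additive constant depending only on $\delta$; and second, for sequences $a_n\to\alpha$, $b_n\to\beta$ in $\widehat X$ one has $(a_n\mid b_n)_{x_0}\to\infty$ if and only if $\alpha=\beta$. I will also use that the Cannon--Thurston map $\hat i$ is $H$-equivariant: since $i$ is $H$-equivariant and $\hat i$ is its unique continuous extension, $\hat i\circ t_h=t_h\circ\hat i$ on $\widehat{\Gamma_H}$; consequently both $\Lambda$ and $\Lambda_{CT}$ are $H$-invariant.

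For $\Lambda\subseteq\Lambda_{CT}$, take $(p,q)\in\Lambda$. By $H$-invariance we may assume $p,q$ are the endpoints of some $\lambda\in\Sigma$ itself, so $\lambda=\lim_i\lambda_i$ (Hausdorff) with $1\in\lambda_i$ and $\lambda_i^r\cap B(i)=\emptyset$. Since $\lambda$ is bi-infinite and each $\lambda_i$ contains $1$, the lengths of the $\lambda_i$ tend to infinity and the $\lambda_i$ converge to $\lambda$ uniformly on compacta; hence, possibly after relabeling, the endpoints $a_i,b_i$ of $\lambda_i$ converge in $\widehat{\Gamma_H}$ to $p$ and $q$. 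Continuity of $\hat i$ gives $i(a_i)\to\hat i(p)$ and $i(b_i)\to\hat i(q)$ in $\widehat{\Gamma_G}$. Now $\lambda_i^r$ is a geodesic in $\Gamma_G$ from $i(a_i)$ to $i(b_i)$ lying outside $B(i)$, so $d(1,[i(a_i),i(b_i)])\to\infty$ and therefore $(i(a_i)\mid i(b_i))_1\to\infty$; this forces $\hat i(p)=\hat i(q)$. Together with $p\neq q$ (part of the definition of $\Lambda$), $(p,q)\in\Lambda_{CT}$.

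For $\Lambda_{CT}\subseteq\Lambda$, take $(p,q)\in\Lambda_{CT}$ and fix a bi-infinite geodesic $\gamma$ in $\Gamma_H$ from $p$ to $q$ and a vertex $v\in\gamma$; then $\gamma':=t_{v^{-1}}\gamma$ is a bi-infinite geodesic through $1$ with endpoints $v^{-1}p$, $v^{-1}q$. Choose $x_n,y_n$ on $\gamma'$ on opposite sides of $1$ with $x_n\to v^{-1}p$ and $y_n\to v^{-1}q$; the sub-segments $\sigma_n:=[x_n,y_n]\subseteq\gamma'$ contain $1$ and converge to $\gamma'$. By equivariance $\hat i(v^{-1}p)=\hat i(v^{-1}q)$, so $i(x_n)$ and $i(y_n)$ converge to the same point of $\widehat{\Gamma_G}$, whence $(i(x_n)\mid i(y_n))_1\to\infty$ and thus $d(1,\sigma_n^r)\to\infty$. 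Passing to a subsequence $n_k$ with $d(1,\sigma_{n_k}^r)\geq k$, the segments $\sigma_{n_k}$ satisfy the defining conditions of $\Sigma$ (they contain $1$ and $\sigma_{n_k}^r\cap B(k)=\emptyset$) and still converge to $\gamma'$, so $\gamma'\in\Sigma$. Since $p,q$ are the endpoints of $t_v\gamma'=\gamma$ and $p\neq q$, we conclude $(p,q)\in\Lambda$.

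I expect the only real obstacle to be bookkeeping: making rigorous the passage between the "avoids $B(i)$" formulation of $\Sigma$ and the "Gromov product tends to infinity / same boundary point" formulation, i.e.\ tracking the additive $\delta$-constants, and in the reverse inclusion extracting a subsequence so that the geodesic realizations genuinely escape every ball at the rate built into the definition of $\Sigma$. The identification of the endpoints of the $\lambda_i$ with $p,q$ in $\widehat{\Gamma_H}$ (uniform-on-compacta convergence of geodesics through a common point, given that their lengths diverge) is routine but should be stated carefully.
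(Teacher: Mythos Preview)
Your proof is correct and follows essentially the same approach as the paper's own proof: both inclusions are handled by translating so that the relevant bi-infinite geodesic passes through $1$, then using finite approximating segments whose realizations escape to infinity in $\Gamma_G$. The only cosmetic difference is that where the paper phrases the escape of $\lambda_i^r$ as Hausdorff convergence to a single boundary point $z\in\partial\Gamma_G$, you unpack this via Gromov products and the equivalence $d(1,[a,b])\asymp (a\mid b)_1$; this is the same argument with the hyperbolic-geometry bookkeeping made explicit.
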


{\bf Proof:}
Let $(p,q) \in \Lambda$. After translating by an element of $H$ if necessary
assume that a bi-infinite geodesic  $\lambda$ passing through $1$ has
$p, q$ as its end-points. By definition of $\Lambda$ there exist geodesic
segments $\lambda_i \subset \Gamma_H$ converging to $\lambda$ in the Hausdorff
topology such that $\lambda_i^r \cap B(i) = \emptyset$. Since a 
Cannon-Thurston map exists, there exists $z \in \partial{\Gamma_G}$
such that $\lambda_i^r \rightarrow z$ in the Hausdorff topology on
$\widehat{\Gamma_G}$ and $\hat{i} (p) = z = \hat{i} (q)$.
Hence $\Lambda \subset \Lambda_{CT}$.

Conversely, 
let $(p,q) \in \Lambda_{CT}$. 
After translating by an element of $H$ if necessary
assume that a bi-infinite geodesic  $\lambda$ passing through $1$ has
$p, q$ as its end-points. Choose ${p_i}, {q_i} \in \Gamma_H$ such that
$p_i \rightarrow p$ and $q_i \rightarrow q$. Let $\lambda_i$ denote
the subsegment of $\lambda$ joining ${p_i}, {q_i}$. Then $\lambda_i^r$
converges to $\hat{i} (p) = \hat{i} (q)$ in the Hausdorff topology
on $\widehat{\Gamma_G}$. Passing to a subsequence if necessary we
can assume that $\lambda_i^r \cap B(i) = \emptyset$. Hence 
$\Lambda_{CT} \subset \Lambda$. $\Box$

\medskip

{\bf Remark:} Suppose $H_1$ is a hyperbolic subgroup of $H$. Let
$\hat{j}$ and $\hat{i}$ denote Cannon-Thurston maps for the pairs
$({H_1},H)$ and $(H,G)$ respectively. Then the composition 
${\hat{i}}\cdot{\hat{j}}$ is a Cannon-Thurston map for the pair
$({H_1},G)$. Further 
from Lemma \ref{equality} it follows that 
\begin{center}

${\Lambda}({H_1},G) = {\Lambda}({H_1},H) \cup {({\hat{j}})}^{-1} ({\Lambda}
(H,G))$.

\end{center}

\medskip

\medskip

\section{Extensions by Free Groups}

Let $1 \rightarrow H \rightarrow G \rightarrow {\mathbb{Z}} \rightarrow 1$
denote an exact sequence of hyperbolic groups arising out of a hyperbolic
automorphism $\phi$  of the hyperbolic group $H$. The
notion of a hyperbolic automorphism was defined in \cite{BF} (see below)
and shown to be equivalent to requiring that $G$ be hyperbolic.

\medskip

{\bf Definition:} {\it Let $\phi$ be an automorphism of a hyperbolic
group $H$ (equipped with the word metric $|.|$). 
Let $\lambda > 1$. Let $S({\phi},{\lambda}) = \{ h \in H :
|{\phi}(h)| > {\lambda}|h| \}$. If $h \in S({\phi},{\lambda})$, we say
${\phi}$ stretches $h$ by $\lambda$. $\phi$ will be called 
{\bf hyperbolic} if for all $\lambda > 1$
there exists $n > 0$ such that for all
$h \in H$, at least one of $\phi^n$ or $\phi^{-n}$ stretches 
$h$ by $\lambda$. }

\medskip

$\phi$ and $\phi^{-1}$ induce bijections (also denoted by $\phi$ and 
$\phi^{-1}$ ) of the vertices of $\Gamma_H$. 

\smallskip

{\it A {\bf free homotopy representative} of a word $w\in{H}$
is a geodesic $[a,a{w_0}]$ 
in $\Gamma_H$ where $w_0$ is a shortest word in the conjugacy
class of $w$ in $H$.}

\smallskip

Given $h\in{H}$ let ${\Sigma}(h,n,+)$ (resp. ${\Sigma}(h,n,-)$
be the ($H$-invariant) collection of
all free homotopy representatives of  ${\phi}^n{(h)}$ (resp. $\phi^{-n}(h)$)
in $\Gamma_H$.  
The intersection with ${\partial}{\Gamma_H}{\times}{\partial}{\Gamma_H}$
of the union 
of all bi-infinite subsequential limits (in the Hausdorff topology on
$\widehat{\Gamma_H}$ )
of elements of  ${\Sigma}(h,n,+)$ (resp. ${\Sigma}(h,n,-)$
as $n\rightarrow{\infty}$ will be denoted by 
${\Lambda}_{+} (h)$ (resp.  ${\Lambda}_{-} (h)$).

\smallskip

{\bf Definition:} {\it The  {\bf stable }
and  {\bf unstable ending laminations} are respectively given by}

\begin{center}

$\Lambda_+$ = ${{\bigcup}_{h\in{H}}}{{\Lambda}_{+}(h)}$ \\
$\Lambda_-$ = ${{\bigcup}_{h\in{H}}}{{\Lambda}_{-}(h)}$ \\

\end{center}

\smallskip

\begin{theorem} \cite{mitra1}
Let $1 \rightarrow H \rightarrow G \rightarrow {\mathbb{Z}} \rightarrow 1$
denote an exact sequence of hyperbolic groups arising out of a hyperbolic
automorphism $\phi$  of the hyperbolic group $H$.
Then there exists a Cannon Thurston map for the pair $(H,G)$.
\label{CT}
\end{theorem}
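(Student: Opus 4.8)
\medskip

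\noindent\emph{Sketch of proof.}
The plan is to work with a ``tree of spaces'' model for $\Gamma_G$. Choosing $t\in G$ mapping to a generator of $\mathbb{Z}$, the group $G$ is the mapping torus of $\phi$, so $\Gamma_G$ is quasi-isometric to a stack $\bigcup_{n\in\mathbb{Z}}F_n$ of ``sheets'' $F_n$, each a copy of $\Gamma_H$ (the coset $t^nH$), with consecutive sheets joined along their vertex sets by edges realizing conjugation by $t^{\pm1}$, i.e.\ the automorphisms $\phi^{\pm1}$; in this picture $1\in\Gamma_G$ lies in $F_0$. The proof then reduces, by a standard criterion for the existence of a Cannon--Thurston map (of the same flavour as Lemma~\ref{qccrit}, but now \emph{not} requiring $H$ to be quasiconvex), to producing a function $f:\mathbb{N}\to\mathbb{N}$ with $f(n)\to\infty$ such that for every geodesic segment $\lambda\subset F_0=\Gamma_H$: if $\lambda$ avoids $B_n(1)$ in $\Gamma_H$, then every geodesic realization $\lambda^r\subset\Gamma_G$ avoids $B_{f(n)}(1)$ in $\Gamma_G$. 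In other words, everything comes down to showing that deep $\Gamma_H$-geodesics have deep $\Gamma_G$-realizations, with a definite divergence rate.

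To this end I would, for each geodesic segment $\lambda\subset F_0$, build a \emph{hyperbolic ladder} $B_\lambda\subset\Gamma_G$ level by level: set $\lambda_0=\lambda$, and inductively let $\lambda_{n+1}\subset F_{n+1}$ be a free homotopy representative (chosen so as to keep combinatorial length minimal) joining the images of the endpoints of $\lambda_n$ under the inter-sheet map, and symmetrically for $n<0$ using $\phi^{-1}$; let $B_\lambda$ be the union of the $\lambda_n$ together with the connecting edges. This is where the hyperbolicity (flaring) hypothesis on $\phi$ enters for the first time: it guarantees that, moving away from $F_0$ in the appropriate direction, the length of $\lambda_n$ decays at a definite exponential rate, so that each $\lambda_n$ is a uniform quasigeodesic in its sheet and $B_\lambda$ has controlled geometry.

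Next I would define a coarse retraction $\Pi_\lambda:\Gamma_G\to B_\lambda$ which on each sheet $F_n$ is (up to bounded error, since $\Gamma_H$ is hyperbolic) nearest-point projection of $\Gamma_H$ onto $\lambda_n$, and prove that $\Pi_\lambda$ is coarsely Lipschitz with constants \emph{independent of $\lambda$}. Checking that adjacent points in consecutive sheets have uniformly close images is the heart of the argument and is where the flaring condition does the real work, bounding how much the projection can drift as one crosses between sheets. A uniformly coarse-Lipschitz retraction onto $B_\lambda$ forces $B_\lambda$ to be quasiconvex in $\Gamma_G$ with constants uniform in $\lambda$, so any geodesic realization $\lambda^r$ stays within a uniformly bounded neighborhood of $B_\lambda$. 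This uniform quasiconvexity estimate for the ladders is the step I expect to be the main obstacle, since it is precisely where the flaring hypothesis must be made quantitative and combined with the hyperbolicity of the sheets.

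Finally I would extract the divergence function. Since the sheet $F_0$ is exponentially distorted in $\Gamma_G$, the endpoints of a deep $\lambda$ already sit at $\Gamma_G$-distance $\gtrsim\log n$ from $1$. If $\lambda^r$ came within $\Gamma_G$-distance $R$ of $1$, the previous step gives a point of $B_\lambda$ within $R+O(1)$ of $1$; tracking which sheet $F_m$ that point lies in (so $|m|\le R+O(1)$, since changing sheets costs at least one edge each), and combining this with the exponential decay rate of $|\lambda_m|$ supplied by flaring and the exponential distortion of $F_m$, one pulls the estimate back to $F_0$ and concludes that $\lambda$ comes within $\Gamma_H$-distance $e^{O(R)}$ of $1$. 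Hence if $\lambda$ avoids $B_n(1)$ in $\Gamma_H$ then $d_G(1,\lambda^r)\gtrsim\log n$, which is the desired $f$. Verifying the criterion then produces the continuous extension $\hat i:\widehat{\Gamma_H}\to\widehat{\Gamma_G}$. Once the uniform ladder estimate is in hand, the remaining distance bookkeeping with the exponential distortion is routine. $\Box$
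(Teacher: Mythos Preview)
The paper does not supply a proof of this theorem at all: it is simply quoted from \cite{mitra1} and used as a black box. Your sketch is, in outline, exactly the ladder argument of that reference (tree-of-spaces model, build $B_\lambda=\bigcup_n\lambda_n$, construct a coarse nearest-point retraction $\Pi_\lambda$ onto $B_\lambda$, deduce uniform quasiconvexity of $B_\lambda$, then read off the divergence function), so in that sense your proposal is the ``same approach'' as the source the paper cites.

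One point in your write-up is slightly off and worth correcting before you flesh it out. Flaring does \emph{not} say that $|\lambda_n|$ decays as you move away from $F_0$ in an appropriate direction; for a generic segment the lengths may well grow in both directions, and in any case each $\lambda_n$ is already a geodesic in its sheet by construction, so there is nothing to prove about ``each $\lambda_n$ is a uniform quasigeodesic.'' The role of the hyperbolicity/flaring hypothesis is precisely the one you identify in the next paragraph: it is what makes the sheetwise nearest-point projections assemble into a \emph{uniformly} coarse-Lipschitz retraction $\Pi_\lambda$ (bounding the drift of projections between adjacent sheets). The final divergence estimate then comes not from any decay of $|\lambda_n|$ but from the elementary observation that a point of $B_\lambda$ in sheet $F_m$ projects (in $F_m$) to a point of $\lambda_m$, and pulling this back through $|m|$ sheet-maps to $F_0$ costs only an exponential factor in $|m|$, while $|m|$ itself is bounded by the $\Gamma_G$-distance to $1$. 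With that adjustment your sketch is a faithful summary of the argument in \cite{mitra1}.
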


\begin{theorem} \cite{mitra2}
Let $1 \rightarrow H \rightarrow G \rightarrow {\mathbb{Z}} \rightarrow 1$
denote an exact sequence of hyperbolic groups arising out of a hyperbolic
automorphism $\phi$  of the hyperbolic group $H$.
Then $\Lambda_{CT} = \Lambda_{+} \cup \Lambda_{-}$.
\label{equallams}
\end{theorem}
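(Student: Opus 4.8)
The plan is to reduce the statement to a comparison of $\Lambda$ with $\Lambda_+\cup\Lambda_-$ and then to match both sides against the intrinsic distortion geometry of the extension. Since $\phi$ is a hyperbolic automorphism, the Cannon--Thurston map $\hat i$ exists by Theorem \ref{CT}, so Lemma \ref{equality} gives $\Lambda=\Lambda_{CT}$; thus it suffices to prove $\Lambda=\Lambda_+\cup\Lambda_-$. Fix an element $t\in G$ mapping to the generator of $\mathbb Z$, so that $\phi(h)=tht^{-1}$ for $h\in H$, and write $\tau:G\to\mathbb Z$ for the projection, calling $\tau(g)$ the \emph{level} of $g$ (this $t$ is a group element, not one of the translation maps $t_h$). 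The organising principle I would use is the following dictionary: a geodesic segment $\mu\subset\Gamma_H$ through $1$ has geodesic realization $\mu^r$ escaping $B(R)$ precisely when the $G$--geodesic joining the endpoints of $\mu$ must leave level $0$ and travel to some level $\pm m$ with $m$ large, and by the hyperbolicity of $\phi$ this occurs exactly when $\phi^{\mp m}(\mu)$ is short in $\Gamma_H$. In other words, the distorted segments are precisely long pieces of $\phi^{m}$ (or $\phi^{-m}$) applied to bounded segments, which is exactly the generating data for $\Lambda_+$ (resp. $\Lambda_-$).

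For the inclusion $\Lambda_+\cup\Lambda_-\subseteq\Lambda$ I would argue as follows for $\Lambda_+$, the case of $\Lambda_-$ being symmetric under $\phi\leftrightarrow\phi^{-1}$. Let $(p,q)\in\Lambda_+(h)$, so there is a bi-infinite geodesic $\lambda$ with endpoints $p,q$ obtained as a Hausdorff limit of free homotopy representatives $\sigma_{n}$ of $\phi^{n}(h)$, translated so that $1$ lies near the centre. Each $\sigma_n$ represents the $H$--conjugacy class of $\phi^n(h)$, which is $G$--conjugate to the fixed element $h$; hence the $G$--translation length of $\sigma_n$ stays bounded while $|\sigma_n|_H\to\infty$, so $\sigma_n$ is increasingly distorted. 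Taking $\lambda_i$ to be the centred subsegment of $\sigma_{n_i}$ of length $i$, one checks that the realization $\lambda_i^{r}$ must dip to a level of order $n_i$ in order to exploit that $\phi^{-n_i}(\sigma_{n_i})$ is bounded, and therefore $\lambda_i^{r}\cap B(i)=\emptyset$ after passing to a subsequence. Since $\lambda_i\to\lambda$ in the Hausdorff topology, $(p,q)\in\Lambda$ by definition.

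For the reverse inclusion $\Lambda\subseteq\Lambda_+\cup\Lambda_-$ I would take $(p,q)\in\Lambda$ with approximating segments $\lambda_i\ni 1$ whose realizations satisfy $\lambda_i^{r}\cap B(i)=\emptyset$, and $\lambda_i\to\lambda$ with endpoints $p,q$. Using the quasiconvex ``ladder'' $B_{\lambda_i}\subset\Gamma_G$ from the construction of the Cannon--Thurston map in \cite{mitra1}, the realization $\lambda_i^{r}$ is uniformly close to $B_{\lambda_i}$, so escaping $B(i)$ forces $\lambda_i^{r}$ to reach a level $\pm m_i$ with $m_i\to\infty$. The flaring inequality attached to the hyperbolic automorphism $\phi$ (see \cite{BF}) then shows that reaching level $+m_i$ (resp. $-m_i$) cheaply is possible only when $\phi^{-m_i}(\lambda_i)$ (resp. $\phi^{m_i}(\lambda_i)$) has bounded length. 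Thus $\lambda_i$ is a long piece of $\phi^{m_i}$ (resp. $\phi^{-m_i}$) applied to a bounded segment; comparing with a conjugacy-minimal representative and letting $i\to\infty$, the limit $\lambda$ is a leaf of $\Lambda_+$ (resp. $\Lambda_-$), so $(p,q)\in\Lambda_+\cup\Lambda_-$.

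The main obstacle I anticipate is the precise geometric control underlying the dictionary above, used in both directions: converting the escape statement $\lambda_i^{r}\cap B(i)=\emptyset$ into the quantitative statement that $\lambda_i^{r}$ reaches level $\pm m_i$ with $m_i\to\infty$ while the corresponding $\phi^{\mp m_i}$--image is bounded. This is exactly where the hyperbolicity of $\phi$ enters: the flaring inequality of \cite{BF} guarantees that a segment cannot be short at two widely separated levels, so there is a well-defined level at which $\lambda_i$ is shortest, and the realization is forced down to and back from that level. A secondary technical point is to match the resulting bounded ``core'' segment with a genuine free homotopy representative of some $\phi^{\pm n}(h)$, so that the limit leaf lies in $\Lambda_\pm$ as defined rather than in some a priori larger family of distorted lines; here I would again use the attracting and repelling lamination structure of fully irreducible (and, more generally, hyperbolic) automorphisms from \cite{BFH}.
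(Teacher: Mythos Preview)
This theorem is not proved in the paper; it is quoted from \cite{mitra2} as a black box, so there is no in-paper proof to compare against. Your outline is nonetheless a fair sketch of the strategy in \cite{mitra2}: the reduction to $\Lambda=\Lambda_+\cup\Lambda_-$ via Lemma~\ref{equality}, the quasiconvex ladder from \cite{mitra1}, and the Bestvina--Feighn flaring inequality are exactly the ingredients used there.

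One genuine issue: the theorem is stated for an arbitrary hyperbolic group $H$, so your closing appeal to \cite{BFH} (which is specific to free groups) is out of place. The ``secondary technical point'' of matching the bounded core segment at the optimal level with a free homotopy representative of some $\phi^{\pm m}(h)$ must be handled by pure hyperbolic-geometry arguments, using only that $h$ ranges over all of $H$ in the definition of $\Lambda_\pm$; this is how \cite{mitra2} proceeds, and no train-track input is needed or available in that generality. A minor slip: ``the $G$--translation length of $\sigma_n$'' does not parse, since $\sigma_n$ is a segment; what you mean is that the element it represents is $G$--conjugate to $h$ and hence has bounded stable $G$--length. Converting that into $\lambda_i^r\cap B(i)=\emptyset$ for centred subsegments still requires the ladder quasiconvexity and flaring, so the two inclusions are more symmetric than your write-up suggests.
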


Further, it is shown  in \cite{mitra2} that only finitely many $h$'s need
be considered in the definition of $\Lambda_{+}$ or $\Lambda_{-}$.

We turn now to the main focus of this paper, the case where
 $H$ is free
and $\phi$ is an irreducible hyperbolic automorphism \cite{BF} .
 Such automorphisms have been
studied in great detail by Bestvina, Feighn and Handel \cite{BF} ,
\cite{BFH} , \cite{BFHa} . 

\medskip

{\bf Definition:} A non negative irreducible matrix is {\it aperiodic}
if it has an iterate that is strictly positive.

\medskip

{\bf Definition:}
Let us assume for a start that transition matrices of $\phi$ and
${\phi}^{-1}$ with respect to train-track representatives [see \cite{BH}
for definitions] are  aperiodic. We shall refer to such automorphisms
as {\it aperiodic}. 
Note that in this definition, we require transition matrices of both
$\phi$ and $\phi^{-1}$ to be aperiodic.

\medskip

In this case, the definitions
of ending laminations here and in \cite{BFH} do not coincide. However the difference between these is now understood \cite{kl1, kl2}.
We recall definitions from \cite{BFH} .

Let $f : X \rightarrow X$ be a train-track representative of an outer
automorphism with aperiodic transition matrix. Endow $X$ with the
structure of a marked $\mathbb{R}$-graph so that $f$ expands lengths of edges
by a uniform factor $\lambda > 1$. Let $x\in X$ be an $f$-periodic point 
in the interior of some edge. Let $\epsilon > 0$ be small, and let $U$
be the $\epsilon$-neighborhood of $x$. Then for some $N > 0$, 
$U \subset {f^N}(U) $. Choose an isometry $l : (-{\epsilon},{\epsilon})
\rightarrow U$ and extend it to the unique locally isometric immersion 
$l : {\mathbb{R}} \rightarrow X$ such that $l({{\lambda}^N}t) = {f^N}(l(t))$.
We say $l$ is obtained by iterating a neighborhood of $x$. $l$ will
also be termed a {\it leaf} of the ending lamination.

{\bf Definitions :} Two isometric immersions $[a,b] \rightarrow X$
and $[c,d] \rightarrow X$ are said to be equivalent if there is
an isometry of $[a,b]$ onto $[c,d]$ making the triangle commute.

A {\it leaf segment} of an isometric immersion 
$\mathbb{R} \rightarrow X$ is the equivalence class of the restriction
to a finite interval. A {\it half leaf} of an isometric immersion 
$\mathbb{R} \rightarrow X$ is the equivalence class of the restriction
to a semi-infinite interval $(-\infty,a]$ or $[b, \infty)$.

Two isometric immersions $l, l^{\prime} : \mathbb{R} \rightarrow X$
are {\it (weakly) equivalent } if every leaf segment of $l$ is
a leaf segment of $l^{\prime}$ and vice versa.

Since $f$ has an aperiodic transition matrix, $l$ is surjective.
Using this, Bestvina, Feighn and Handel \cite{BFH}
show that any two leaves
of the ending lamination obtained by iterating neighborhoods of
$f$-periodic points are equivalent. Let $\Lambda_{BFH}(f)$  denote the collection of leaves obtained from $f$ in this way.

\medskip

\noindent {\bf Relation Between $\Lambda_{BFH}(f)$ and Ending Laminations} \footnote{I had incorrectly equated  $\Lambda_+$ and $\Lambda_{BFH}(f)$
at this stage
of \cite{mahan-scottswar}. The difference lies in the diagonal leaves to be added.} \\
$Diag(\Lambda_{BFH}(f))$ will denote the diagonal extension of $\Lambda_{BFH}(f)$ following \cite{kl1}. 
In the context of surfaces this simply corresponds to adding on the diagonals of an ideal polygon.
We now explain what this is more precisely in the context of free groups.
Define a relation $p \sim q$, if $p, q$ are end-points of a leaf of $\Lambda_{BFH}(f)$. Then the transitive closure of the relation of this relation gives 
$Diag(\Lambda_{BFH}(f))$, i.e. $p, q$ are end-points of a leaf of $Diag(\Lambda_{BFH}(f))$ if there exists a finite sequence $p=p_0, p_1, \cdots ,
p_n = q$ such that $p_i, p_{i+1}$ are end-points of a leaf of $\Lambda_{BFH}(f)$ for all $i=0, \cdots , n-1$.

The following Proposition due to Kapovich and Lustig furnishes the relationship we need between $\Lambda_{BFH}(f)$ and $ \Lambda_+$.

\begin{theorem} (Proposition 6.4 of \cite{kl2}) Let $1 \rightarrow H \rightarrow G \rightarrow {\mathbb{Z}} \rightarrow 1$
be an exact sequence of hyperbolic groups induced by an aperiodic
automorphism $\phi$ of the free group $H$ and let $f$ be a train-track representative of $\phi$.
Then $Diag(\Lambda_{BFH}(f))= \Lambda_+$. \label{kl} \end{theorem}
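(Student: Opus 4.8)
{\bf Proof (sketch, following \cite{kl2}) :} The statement is internal to $H$: one has to reconcile two descriptions of the same algebraic lamination, and neither the extension $G$ nor the Cannon-Thurston map enters. Since $H$ is free and $f\colon X\to X$ is a train-track representative of $\phi$, the universal cover $\widetilde X$, taken with the lifted $\mathbb{R}$-graph metric in which $f$ expands lengths by $\lambda>1$, is quasi-isometric to $\Gamma_H$; fix such a quasi-isometry and identify $\partial\widetilde X$ with $\partial\Gamma_H$. Under this identification both $\Lambda_{BFH}(f)$ and $\Lambda_+$ are $H$-invariant subsets of $\partial\Gamma_H\times\partial\Gamma_H$: a leaf of $\Lambda_{BFH}(f)$ corresponds to a bi-infinite \emph{legal} line in $\widetilde X$ (a lift of one of the lines obtained by iterating a neighbourhood of an $f$-periodic point), while $\Lambda_+$ is built from Hausdorff limits of lifts of free homotopy representatives of $\phi^n(h)$. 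Using the finiteness statement recorded just after Theorem~\ref{equallams} (only finitely many $h$ are needed), one checks that $\Lambda_+$ is a finite union of the sets $\Lambda_+(h)$, each of which --- being a union of a compact family of closed limit sets --- is closed; hence $\Lambda_+$ is \emph{closed}. The plan is to establish the two inclusions $Diag(\Lambda_{BFH}(f))\subseteq\Lambda_+$ and $\Lambda_+\subseteq Diag(\Lambda_{BFH}(f))$ separately.

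For $Diag(\Lambda_{BFH}(f))\subseteq\Lambda_+$, first fix a legal loop $\gamma$ in $X$ through an $f$-periodic point $x$ in the interior of an edge. Since the transition matrix of $f$ is aperiodic, for large $n$ the loop $f^n(\gamma)$ is cyclically reduced and legal, hence a free homotopy representative of $\phi^n([\gamma])$ up to bounded error; and since $f^n$ of a small arc about $x$ is a long leaf segment of $\Lambda_{BFH}(f)$ about $x$, the axes of suitable lifts of $f^n(\gamma)$ Hausdorff-converge to the leaf of $\Lambda_{BFH}(f)$ through $x$. Thus the endpoints of that leaf lie in $\Lambda_+([\gamma])\subseteq\Lambda_+$. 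As such leaves are dense in $\Lambda_{BFH}(f)$ and $\Lambda_+$ is closed, $\Lambda_{BFH}(f)\subseteq\Lambda_+$. Finally, a diagonal leaf --- a bi-infinite geodesic of $\widetilde X$ that is a concatenation, at lifts of $f$-periodic branch points, of half-leaves and leaf segments of $\Lambda_{BFH}(f)$ --- is a Hausdorff limit of genuine leaves of $\Lambda_{BFH}(f)$ that wrap around those branch points (recurrence of the lamination, which holds because $f$ is aperiodic). Hence $Diag(\Lambda_{BFH}(f))\subseteq\overline{\Lambda_{BFH}(f)}\subseteq\Lambda_+$.

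For the reverse inclusion $\Lambda_+\subseteq Diag(\Lambda_{BFH}(f))$, let $(p,q)\in\Lambda_+(h)$, realized by lifts $\widetilde\mu_n$ of free homotopy representatives $\mu_n$ of $\phi^n(h)$ whose endpoints converge to $p$ and $q$. Here I would invoke the Bestvina, Feighn and Handel analysis of iteration for an aperiodic train-track map: by the bounded cancellation lemma together with the finiteness of the set of (pre-)Nielsen paths, the reduced representative of $\phi^n(h)$ splits, \emph{for all $n$}, as a concatenation of at most $C=C(h)$ maximal legal subpaths, whose lengths grow like $\lambda^n$, separated by at most $C$ illegal connectors of uniformly bounded length. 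Each maximal legal subpath is an arbitrarily long leaf segment of $\Lambda_{BFH}(f)$ once $n$ is large, and in the Hausdorff limit the bounded connectors shrink to points where two legal half-leaves abut --- and these limiting junction points are $f$-periodic branch points, because the illegal turns of $\phi^n(h)$ occur only at $f^n$-images of the finitely many illegal turns of the chosen representative of $h$. Therefore the geodesic in $\widetilde X$ from $p$ to $q$ is a concatenation of at most $C$ leaves or half-leaves of $\Lambda_{BFH}(f)$ joined at lifts of $f$-periodic branch points, i.e.\ a leaf of $Diag(\Lambda_{BFH}(f))$, so $(p,q)\in Diag(\Lambda_{BFH}(f))$. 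Combining the two inclusions yields $Diag(\Lambda_{BFH}(f))=\Lambda_+$.

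The crux --- and the place where the real work of \cite{kl2} is concentrated --- is the uniform structural fact used in the second inclusion: that the reduced representative of $\phi^n(h)$ is a concatenation of a \emph{bounded} number of legal pieces across bounded illegal interfaces, with the bound independent of $n$, and that in the limit these interfaces accumulate precisely on $f$-periodic branch points, so that one obtains a $Diag$-leaf and nothing larger. This rests on the Bestvina, Feighn and Handel machinery: the bounded cancellation lemma, the stability of the number of illegal turns under iteration of a fixed conjugacy class, and the finiteness of Nielsen paths for an aperiodic train-track map. A secondary and routine point is to verify that the identification of $\partial\Gamma_H$ with $\partial\widetilde X$ and the comparison of the word metric on $\Gamma_H$ with the $\mathbb{R}$-graph metric on $\widetilde X$ are compatible with Hausdorff limits of geodesic segments, so that ``endpoints of a leaf'' means the same thing in both models.
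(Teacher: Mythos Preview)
The paper does not prove this statement at all: Theorem~\ref{kl} is quoted from Kapovich--Lustig \cite{kl2} and used as a black box, so there is no ``paper's own proof'' to compare against. What can be said is that the actual argument in \cite{kl1,kl2} is organised differently from your sketch: it passes through the dual algebraic lamination $L(T_-)$ of the repelling $\mathbb{R}$-tree $T_-$, proving the chain $\Lambda_+ = L(T_-) = Diag(\Lambda_{BFH}(f))$ rather than attacking the two inclusions between $\Lambda_+$ and $Diag(\Lambda_{BFH}(f))$ directly.

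Independently of that, your sketch has a genuine gap in the first inclusion. You argue that every diagonal leaf is a Hausdorff limit of genuine leaves of $\Lambda_{BFH}(f)$, and hence lies in $\overline{\Lambda_{BFH}(f)}\subseteq\Lambda_+$. This is false in general. The explicit description in \cite{kl1} (recorded in the commented discussion preceding Theorem~\ref{kl} in the present paper) shows that a diagonal leaf which is not already a leaf of $\Lambda_{BFH}(f)$ has one of the forms $\rho_1^{-1}\rho_2$ or $\rho_1^{-1}\eta\,\rho_2$, where the $\rho_i$ are combinatorial eigenrays and $\eta$ is an indivisible Nielsen path. An indivisible Nielsen path contains an illegal turn, so a diagonal leaf of the second type crosses an illegal turn; since every leaf of $\Lambda_{BFH}(f)$ is legal and legality of a line is a local condition preserved under convergence, such a diagonal leaf cannot be a Hausdorff limit of leaves of $\Lambda_{BFH}(f)$. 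In particular $\Lambda_{BFH}(f)$ is already closed and $Diag(\Lambda_{BFH}(f))\not\subseteq\overline{\Lambda_{BFH}(f)}$ whenever there is a Nielsen path. To salvage this direction you must exhibit, for each diagonal leaf, a specific conjugacy class $h$ (built, say, from a legal loop concatenated with the Nielsen path) whose $\phi^n$-iterates limit onto that leaf; this is essentially what the $\mathbb{R}$-tree machinery in \cite{kl1,kl2} accomplishes.

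Your outline of the reverse inclusion is closer to the truth but is also imprecise: the ``illegal connectors of uniformly bounded length'' are not arbitrary short paths but must be (pre-)Nielsen paths, and it is exactly this that forces the junction points in the limit to be the endpoints of INPs (hence $f$-periodic) and yields a $Diag$-leaf rather than something larger. That refinement is again part of the structural analysis carried out in \cite{kl1}.
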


\bigskip

We can now  use the results of \cite{BFH} and \cite{BFHa}
in our context.

{\bf Remark :} Since any two leaves are (weakly) equivalent in the
sense of \cite{BFH} above, the equivalence class can alternately be
obtained
by translating some (any) leaf by elements of the free group
and taking Hausdorff limits. This is analogous to the case for
surfaces where the stable lamination of a pseudo anosov
diffeomorphism is the closure of some (any) leaf.

The next Proposition follows from  \cite{BFH} (see particularly Propositions 1.6 and 2.4) and Theorem \ref{kl}.
It says roughly that any   leaf of the stable (or unstable) lamination 
of an aperiodic automorphism `fills' the free group $H$.

The proof of Proposition 1.6 of \cite{BFH} shows that any {\it half-leaf}
`fills' the free group $H$. Proposition \ref{kl} shows that any leaf of the stable (or unstable) lamination must contain a 
 {\it half-leaf} of the corresponding lamination $\Lambda_{BFH}(f)$.

\begin{prop}
Let $1 \rightarrow H \rightarrow G \rightarrow {\mathbb{Z}} \rightarrow 1$
be an exact sequence of hyperbolic groups induced by an aperiodic
automorphism $\phi$ of the free group $H$ (i.e. $\phi$ and
$\phi^{-1}$ have aperiodic transition matrices).  
 If $(p,q) \in \Lambda_{+} $ or $\Lambda_{-}$ lie in the boundary 
$\partial{\Gamma_{H_1}} \subset \partial{\Gamma_{H}}$ 
for a finitely generated subgroup $H_1$ of $H$,
then $H_1$ is of finite index in $H$.
\label{BFH}
\end{prop}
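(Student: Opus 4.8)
\medskip

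\noindent {\bf Outline of proof.} The plan is to argue by contradiction: assuming $H_1$ has infinite index in $H$, I would produce a \emph{half-leaf} of the Bestvina--Feighn--Handel lamination $\Lambda_{BFH}(f)$ that is carried by $H_1$, contradicting the ``filling'' property of half-leaves. It is enough to treat the case $(p,q) \in \Lambda_+$; the case $(p,q) \in \Lambda_-$ is identical after replacing $\phi$ by $\phi^{-1}$ and fixing a train-track representative of $\phi^{-1}$. So I would fix a train-track representative $f : X \rightarrow X$ of $\phi$ with aperiodic transition matrix and identify $\Gamma_H$, up to quasi-isometry, with the universal cover of $X$; under this identification $\Lambda_{BFH}(f)$ becomes a set of bi-infinite geodesics of $\Gamma_H$ whose endpoint pairs constitute the subset $\Lambda_{BFH}(f) \subset \partial{\Gamma_H} \times \partial{\Gamma_H}$. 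Since $H$ is free and $H_1$ is finitely generated, $H_1$ is quasiconvex in $H$ (its Stallings core graph is a finite graph immersing into a rose, so $\Gamma_{H_1} \hookrightarrow \Gamma_H$ is a quasi-isometric embedding); hence the bi-infinite geodesic $\lambda$ of $\Gamma_H$ with end-points $p, q \in \partial{\Gamma_{H_1}}$ lies in a bounded neighbourhood of $\Gamma_{H_1}$. Consequently $\lambda$, and every sub-ray of $\lambda$, is carried by $H_1$ (equivalently, projects into the core of the cover $X_{H_1} \rightarrow X$).

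Next I would locate a half-leaf of $\Lambda_{BFH}(f)$ inside $\lambda$. By Theorem~\ref{kl}, $\Lambda_+ = Diag(\Lambda_{BFH}(f))$, so $(p,q)$ is a leaf of the diagonal extension of $\Lambda_{BFH}(f)$. If $(p,q)$ is itself a leaf of $\Lambda_{BFH}(f)$, then for any vertex $v$ on $\lambda$ each of the two sub-rays of $\lambda$ based at $v$ is already a half-leaf of $\Lambda_{BFH}(f)$. Otherwise $(p,q)$ is a properly diagonal leaf, and here I would invoke the Kapovich--Lustig description of such leaves from \cite{kl1}: the projection of $\lambda$ to $X$ then has the form $\rho_1^{-1}\rho_2$ or $\rho_1^{-1}\eta\rho_2$, where each $\rho_i$ is a combinatorial eigenray and $\eta$ an indivisible Nielsen path, and crucially each combinatorial eigenray $\rho_i$ is a half-leaf of $\Lambda_{BFH}(f)$. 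In either case $\lambda$ contains a sub-ray $\mu$ which is a half-leaf of $\Lambda_{BFH}(f)$, and by the previous paragraph $\mu$ is carried by $H_1$.

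To conclude, I would appeal to the ``filling'' property: the proof of Proposition~1.6 (and of Proposition~2.4) of \cite{BFH} applies verbatim to a half-leaf, since it uses only the surjectivity onto $X$ of the locally isometric immersion defining the (half-)leaf, which holds because the transition matrix of $f$ is aperiodic; thus a half-leaf of $\Lambda_{BFH}(f)$ cannot be carried by a finitely generated subgroup of infinite index in $H$. As $\mu$ is carried by the finitely generated group $H_1$, this forces $[H : H_1] < \infty$, which is what is claimed. I expect the genuinely delicate point to be the passage through the properly diagonal leaves: one must know that every leaf of $\Lambda_+$ not already lying in $\Lambda_{BFH}(f)$ contains a half-leaf of $\Lambda_{BFH}(f)$ as a sub-ray — precisely the structural content of the Kapovich--Lustig analysis of $Diag(\Lambda_{BFH}(f))$ (the $\rho_1^{-1}\rho_2$ / $\rho_1^{-1}\eta\rho_2$ normal form, together with the fact that combinatorial eigenrays are half-leaves) — and, secondarily, that the Bestvina--Feighn--Handel filling argument, originally phrased for full leaves, truly extends to half-leaves; both facts are available in the literature (\cite{kl1}, \cite{BFH}), but they are the steps on which the whole argument hinges.
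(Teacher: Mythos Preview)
Your proposal is correct and follows essentially the same route as the paper: reduce to showing that no leaf of $\Lambda_+$ (resp.\ $\Lambda_-$) is carried by a finitely generated subgroup of infinite index, use Theorem~\ref{kl} ($\Lambda_+ = Diag(\Lambda_{BFH}(f))$) to extract from any leaf of $\Lambda_+$ a half-leaf of $\Lambda_{BFH}(f)$, and then invoke the half-leaf version of Proposition~1.6 of \cite{BFH} to derive a contradiction. The only cosmetic difference is that you spell out the Kapovich--Lustig normal form $\rho_1^{-1}\rho_2$ / $\rho_1^{-1}\eta\rho_2$ for diagonal leaves, whereas the paper simply records the consequence that end-points of $\Lambda_+$ coincide with end-points of $\Lambda_{BFH}(f)$; both formulations deliver the needed half-leaf.
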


\begin{proof} It suffices to show that no leaf of $\Lambda_+$ (or $\Lambda_-$) is carried by a finitely generated subgroup $H_1$ of 
infinite index  $H$. As noted above, 
the proof of Proposition 1.6 of \cite{BFH} shows that any {\it half-leaf}
`fills' the free group $H$, i.e. it cannot have a limit point in a translate of the limit set of $H_1$. Hence no end-point of a leaf of
$\Lambda_{BFH}(f)$ for a train-track representative $f$ of  $\phi$ can have a limit point in a translate of the limit set of $H_1$. 
Since the set of end-points of $\Lambda_{BFH}(f)$ coincide with the end-points of  $\Lambda_+$ by Proposition \ref{kl} we are done.
\end{proof}

We are now in a position to prove 
the main theorem of this paper for aperiodic $\phi$ .

\begin{theorem}
Let $1 \rightarrow H \rightarrow G \rightarrow  {\mathbb{Z}} \rightarrow 1$
be an exact sequence of hyperbolic groups
induced by an aperiodic automorphism $\phi$ of the
free group $H$. 
Let $H_1$ be a finitely generated subgroup of infinite index
 in 
$H$. Then $H_1$ is quasiconvex in $G$.
\label{main}
\end{theorem}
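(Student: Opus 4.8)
The plan is to verify, via Lemma~\ref{qccrit}, that $\Lambda(H_1,G)=\emptyset$. First I would record the elementary input: since $H_1$ is a finitely generated subgroup of the free group $H$, it is itself free and quasiconvex in $H$. Quasiconvexity gives two things. On the one hand, $\Lambda(H_1,H)=\emptyset$ by Lemma~\ref{qccrit}. On the other hand, the inclusion $\Gamma_{H_1}\hookrightarrow\Gamma_H$ sends geodesics to quasigeodesics, so it extends to a continuous injective map $\hat j\colon\widehat{\Gamma_{H_1}}\to\widehat{\Gamma_H}$; that is, a Cannon--Thurston map for $(H_1,H)$ exists, and $\hat j$ restricts to a homeomorphism of $\partial\Gamma_{H_1}$ onto the limit set of $H_1$ in $\partial\Gamma_H$.

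Next I would assemble the facts about the pair $(H,G)$. By Theorem~\ref{CT} a Cannon--Thurston map for $(H,G)$ exists, hence $\Lambda(H,G)=\Lambda_{CT}$ by Lemma~\ref{equality}, and $\Lambda_{CT}=\Lambda_{+}\cup\Lambda_{-}$ by Theorem~\ref{equallams}. Plugging this, together with $\Lambda(H_1,H)=\emptyset$, into the Remark following Lemma~\ref{equality} yields
\[
\Lambda(H_1,G)=\Lambda(H_1,H)\cup\hat j^{-1}\bigl(\Lambda(H,G)\bigr)=\hat j^{-1}\bigl(\Lambda_{+}\cup\Lambda_{-}\bigr).
\]

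Finally I would derive a contradiction from $\Lambda(H_1,G)\neq\emptyset$. A non-empty such set provides a pair $(p,q)$ with $p\neq q$ and $p,q\in\partial\Gamma_{H_1}$ for which $(\hat j(p),\hat j(q))\in\Lambda_{+}\cup\Lambda_{-}$. Because $\hat j$ is injective on $\partial\Gamma_{H_1}$, the points $\hat j(p)\neq\hat j(q)$ form an honest pair in $\Lambda_{+}\cup\Lambda_{-}$ both of whose endpoints lie in $\partial\Gamma_{H_1}\subset\partial\Gamma_H$ (identifying $\partial\Gamma_{H_1}$ with the limit set of $H_1$). Proposition~\ref{BFH} then forces $H_1$ to be of finite index in $H$, contradicting the hypothesis. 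Hence $\Lambda(H_1,G)=\emptyset$, and $H_1$ is quasiconvex in $G$ by Lemma~\ref{qccrit}.

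The essential difficulties have all been absorbed into the cited results: the construction of the Cannon--Thurston map (Theorem~\ref{CT}), its description via the stable and unstable laminations (Theorem~\ref{equallams}), and the filling property of (half-)leaves coming from Bestvina--Feighn--Handel together with Kapovich--Lustig (Proposition~\ref{BFH}). In the synthesis above, the one point requiring care is the passage from a non-trivial element of $\Lambda(H_1,G)$ to a genuine pair of distinct boundary points of $H_1$ identified by the Cannon--Thurston map of $(H,G)$, which is exactly where the injectivity of $\hat j$ and the decomposition of $\Lambda(H_1,G)$ from the Remark are used; this is also the only place where the hypothesis that $H_1$ has \emph{infinite} index enters, namely to contradict Proposition~\ref{BFH}.
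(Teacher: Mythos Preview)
Your proof is correct and follows essentially the same route as the paper: use the Remark after Lemma~\ref{equality} to decompose $\Lambda(H_1,G)$, kill $\Lambda(H_1,H)$ by quasiconvexity of $H_1$ in $H$, kill $\hat j^{-1}(\Lambda(H,G))$ via Proposition~\ref{BFH} together with the infinite-index hypothesis, and conclude by Lemma~\ref{qccrit}. Your extra remarks on the injectivity of $\hat j$ make explicit a point the paper leaves implicit, but otherwise the arguments coincide.
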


{\bf Proof:} From Theorem \ref{CT} the pair $(H,G)$ has a Cannon -
Thurston map. Further, from Theorem \ref{equallams} 

\begin{center}

$\Lambda_{CT} (H,G) = \Lambda (H,G) = \Lambda_{+} \cup \Lambda_{-}$.

\end{center}

Let $j : H_1 \rightarrow H$ and $i : H \rightarrow G$
denote inclusions. Since
$H_1$ is quasiconvex in $H$,  ${\Lambda}({H_1},H) = \emptyset$
(Lemma \ref{qccrit} ).
Further from Proposition \ref{BFH} above
$j^{-1} ({\Lambda}(H,G)) = \emptyset$.

Also from the remark following
Lemma \ref{equality} , $\Lambda{({H_1},G)} = \Lambda({H_1},H)
\cup j^{-1} ({\Lambda}(H,G)) = \emptyset$.

Hence from Lemma \ref{qccrit} $H_1$ is quasiconvex in $G$. $\Box$

\medskip

As a second step we deal with automorphisms $\phi$ of $H$ satisfying
the following: \\
  There exists a  decomposition $H = {K_1}*{K_2}*{\cdots}*{K_n}$
of $H$ into $\phi$-invariant factors $K_i$ such that the
restrictions ${\phi}|_{K_i} = \phi_i$ are aperiodic. 

Let $\Lambda_i$ denote the ending laminations of $\phi_i$.

We need to first show that the endpoints of {\it half-leaves} of 
$\Lambda\cap{\partial{\Gamma_{K_i}}}$ are precisely the  endpoints of {\it half-leaves} $ \Lambda_i$. 
Let us consider a reduced word $w=a_1\cdots a_k$ such that each $a_i$ is a maximal subword lying in a  
$\phi$-invariant factor $K_i$. We now consider subsequential limits of the geodesic representatives of $\phi^n(w)$ giving us the leaves of
$\Lambda_+$. Since each $a_i$ is a maximal subword lying in a $\phi$-invariant factor, there is no cancellation between the geodesic representatives
of $\phi^n(a_i)$. Also note that since the lengths of each $\phi^n(a_i)$ tends to infinity as $n$ tends to infinity, it suffices
to consider $k=1, 2$. If $k=1$, then the subsequential limits of the iterates $\phi^n(a_1)$ are precisely
the leaves of $\Lambda_{i+}$ and we are done in this case. Let $k=2$ and $w=a_1a_2$.  
Then the subsequential   limits of the iterates $\phi^n(w)= \phi^n(a_1)\phi^n(a_2)$ contain half leaves of limits of
the iterates $\phi^n(a_1)$ or $\phi^n(a_2)$ or both. In any  case the  endpoints of {\it half-leaves} of 
$\Lambda\cap{\partial{\Gamma_{K_i}}}$ are precisely the  endpoints of {\it half-leaves} $ \Lambda_i$.

Suppose  $H_1$ is a finitely generated subgroup of $H$ that is
distorted
in $G$. Since $H_1$ is quasi-convex in $H$, there exists
a pair $(p,q) \in \Lambda = \Lambda_{+} \cup \Lambda_{-}$ 
lying on the boundary 
$\partial{\Gamma_{H_1}} \subset \partial{\Gamma_{H}}$. 
Let $l$ be a leaf of $\Lambda$
 joining $p,q$. By Theorem \ref{equallams} $l$ lies
in the Hausdorff limit of sequences of segments obtained by iterating
$\phi$ or $\phi^{-1}$ on some $h\in{H}$.
 By the pigeon-hole principle there exists arbitrarily long segments
of $l$ contained in a (fixed) conjugate of $K_i$ for some $i$. 
For ease of exposition let us assume that this is the trivial
conjugate of $K_i$, i.e. $K_i$ itself. Translating
by appropriate elements of $H_1$ and taking a Hausdorff limit
we obtain an endpoint of a leaf (or half-leaf) of the ending lamination $\Lambda$
lying in the intersection $\partial{\Gamma_{H_1}}{\cap}\partial{\Gamma_{K_j}}$.
In particular, there exists a  point $s \in
\partial{\Gamma_{H_1}}$ which is also an end-point of a half-leaf of
$\Lambda_j$ (by the argument in the previous paragraph).

Since intersection of quasiconvex subgroups is quasiconvex
 \cite{short} it follows that $H_1\cap{K_j}$ is quasi-convex
in $H$. In
particular $H_1\cap{K_j}$ is 
finitely generated. Also as observed above, $H_1$ has a boundary point in common with a leaf
of the ending lamination $\Lambda_j$. Hence from Theorem \ref{main} 
$H_1\cap{K_j}$ is a finite index subgroup of $K_j$.

We have shown :

\begin{theorem}
Let $1 \rightarrow H \rightarrow G \rightarrow  {\mathbb{Z}} \rightarrow 1$
be an exact sequence of hyperbolic groups
induced by an  automorphism $\phi$ of the
free group $H$ satisfying the following : \\
  There exists a  decomposition $H = {K_1}*{K_2}*{\cdots}*{K_n}$
of $H$ into $\phi$-invariant factors $K_i$ such that the
restrictions ${\phi}|_{K_i} = \phi_i$ are aperiodic. 

Let $H_1$ be a finitely generated subgroup of infinite index
 in 
$H$ such that $H_1$ is distorted in $G$. Then 
there exist $h\in{H}$ and $K_j$ such that  $H_1$ contains
a finite index subgroup of 
$h^{-1}K_j{h}$. 
\label{main2}
\end{theorem}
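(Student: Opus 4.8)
The plan is to concentrate the distortion of $H_1$ inside a single conjugate of a free factor $K_j$ and then invoke Theorem \ref{main} (in the guise of Proposition \ref{BFH}) for the aperiodic restriction $\phi_j=\phi|_{K_j}$. As in the proof of Theorem \ref{main}, a finitely generated $H_1$ is quasiconvex in the free group $H$, so $\Lambda(H_1,H)=\emptyset$ by Lemma \ref{qccrit}, and by the remark following Lemma \ref{equality} one has $\Lambda(H_1,G)=\hat{j}^{-1}(\Lambda(H,G))$. Since $H_1$ is distorted in $G$, Lemma \ref{qccrit} forces $\Lambda(H_1,G)\neq\emptyset$, so by Theorem \ref{equallams} there is $(p,q)\in\Lambda_+\cup\Lambda_-$ with $p,q\in\partial\Gamma_{H_1}$. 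Fix a bi-infinite geodesic leaf $l$ of $\Lambda=\Lambda_+\cup\Lambda_-$ joining $p$ to $q$; quasiconvexity of $H_1$ in $H$ keeps $l$ within a bounded neighbourhood of $\Gamma_{H_1}$.

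The combinatorial heart of the argument is a half-leaf comparison: I would show that the end-points of half-leaves of $\Lambda$ that lie on $\partial\Gamma_{K_i}$ are exactly the end-points of half-leaves of $\Lambda_i$, the ending lamination of $\phi_i$. For this, write a reduced word $w=a_1\cdots a_k$ as a concatenation of maximal syllables, each inside a single $\phi$-invariant factor. Because $\phi$ preserves the factors there is no cancellation between the geodesic representatives of the blocks $\phi^{\pm n}(a_t)$, and each $|\phi^{\pm n}(a_t)|\to\infty$, so for the purpose of Hausdorff limits one may assume $k\le 2$: the case $k=1$ produces exactly the leaves of $\Lambda_{i\pm}$, and the case $k=2$ produces leaves each half of which is a half-leaf of one of the two laminations $\Lambda_i$ involved. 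Translating between $\Lambda_{i\pm}$ and this combinatorial picture uses Theorem \ref{kl}: since $\Lambda_{i+}=Diag(\Lambda_{BFH}(f_i))$, every leaf, hence every half-leaf, of $\Lambda_{i+}$ shares each of its end-points with a half-leaf of $\Lambda_{BFH}(f_i)$.

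Now I would feed $l$ into the pigeon-hole principle. By Theorem \ref{equallams}, $l$ is a Hausdorff limit of free homotopy representatives of $\phi^{\pm n}(h)$ for a single $h\in H$; since the syllables of these representatives in $K_1*\cdots*K_n$ grow without bound, $l$ contains arbitrarily long subsegments lying in a single coset of one factor, and, recentring and pigeon-holing once more, we may take this coset to be $gK_j$ for a fixed $g\in H$ and some $j$. Translating these long subsegments by suitable elements of $H_1$ so that their midpoints return to a bounded region and passing to a Hausdorff limit produces a half-leaf of $\Lambda$; as the subsegments lie simultaneously in a bounded neighbourhood of $\Gamma_{H_1}$ and inside $g\Gamma_{K_j}$, and the coarse intersection of the quasiconvex subgroups $H_1$ and $gK_jg^{-1}$ lies within bounded Hausdorff distance of $\Gamma_{H_1\cap gK_jg^{-1}}$, this half-leaf stays close to $\Gamma_{H_1\cap gK_jg^{-1}}$. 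Hence its end-point $s$ lies in $\partial\Gamma_{H_1\cap gK_jg^{-1}}\subseteq\partial\Gamma_{H_1}\cap\partial(g\Gamma_{K_j})$, and by the comparison lemma $g^{-1}s$ is the end-point of a half-leaf of $\Lambda_j$.

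To finish, $H_1\cap gK_jg^{-1}$ is quasiconvex in $H$ by \cite{short}, hence finitely generated, and $\phi_j$ is aperiodic, so Proposition \ref{BFH} applies to $\phi_j$ acting on $K_j$. Its proof rests on the half-leaf filling statement drawn from the proof of Proposition 1.6 of \cite{BFH}: an end-point of a half-leaf of $\Lambda_{BFH}(f_j)$, and hence (by Theorem \ref{kl}) of $\Lambda_j$, cannot lie in a translate of the limit set of a finitely generated subgroup of infinite index in $K_j$. Applied to $g^{-1}H_1g\cap K_j$, whose limit set contains $g^{-1}s$, this forces $g^{-1}H_1g\cap K_j$ to have finite index in $K_j$; equivalently, with $h=g^{-1}$, $H_1$ contains a finite-index subgroup of $h^{-1}K_jh$, which is Theorem \ref{main2}. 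I expect the main obstacle to be the half-leaf comparison lemma, together with the care needed to treat diagonal leaves correctly (Theorem \ref{kl}) rather than only the leaves of $\Lambda_{BFH}$; a secondary difficulty is the recentring argument that places the single boundary point $s$ simultaneously in $\partial\Gamma_{H_1}$ and a translate of $\partial\Gamma_{K_j}$.
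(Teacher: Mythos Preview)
Your proposal is correct and follows essentially the same route as the paper: the half-leaf comparison via the syllable decomposition with $k\le 2$, the pigeon-hole argument producing arbitrarily long subsegments of $l$ in a fixed conjugate of some $K_j$, the recentring/Hausdorff-limit step placing an end-point $s$ in $\partial\Gamma_{H_1}\cap g\,\partial\Gamma_{K_j}$, and the appeal to the half-leaf filling property (Proposition \ref{BFH}) to force finite index. If anything you are more careful than the paper in two places: you make explicit the coarse-intersection argument needed to land $s$ in $\partial\Gamma_{H_1\cap gK_jg^{-1}}$ (rather than merely in the intersection of the two limit sets), and you correctly invoke Proposition \ref{BFH} for a single end-point rather than Theorem \ref{main}.
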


\section{Erratum}

In Theorem 3.6 in \cite{mahan-scottswar} I had misquoted Corollary 4.7 of \cite{BFHa}. Hence 
Theorem 3.7 in \cite{mahan-scottswar} stands non-proven.

\medskip
\medskip

{\bf Acknowledgments :} The argument for the case of an aperiodic automorphism
had been outlined in \cite{mitra} . The author would like to thank
his  advisor Andrew
Casson and John Stallings for their help.

{\bf Additional Acknowledgment:}
I would like to thank Ilya Kapovich for pointing out the errors in the published version \cite{mahan-scottswar}
and suggesting the corrections here.

\bibliography{scottswar}
\bibliographystyle{plain}
\noindent        University of California at Berkeley, CA 94720 \\
         email: mitra@math.berkeley.edu \\
\noindent Current Address: School of Mathematical Sciences, RKM Vivekananda University, P.O. Belur Math, Dt. Howrah,
                   West Bengal -711202, India. \\
\end{document}